\documentclass[11pt,a4paper]{paper} 

\usepackage[utf8]{inputenc}
\usepackage[T1]{fontenc}

\usepackage[english]{babel}

\usepackage{a4wide}

\usepackage{amssymb}
\usepackage{amsthm,amsmath}
\usepackage{amsfonts}
\usepackage{latexsym}

\usepackage{url} 
\usepackage[all]{xy} 
\usepackage{enumerate}
\usepackage{color}

\usepackage{graphicx}

\theoremstyle{plain}
\newtheorem{thm}{Theorem}
\newtheorem{lem}{Lemma}[section]

\newtheorem{prop}[lem]{Proposition}

\theoremstyle{definition}
\newtheorem{df}[lem]{Definition}
\newtheorem{rem}[lem]{Remark}
\newtheorem{ex}[lem]{Example}


\newcommand{\bbZ}{\mathbb{Z}}

\newcommand{\bbC}{\mathbb{C}}

\newcommand{\fg}{{\mathfrak{g}}}

\newcommand{\cF}{{\mathcal{F}}}
\newcommand{\cB}{{\cal{B}}}

\newcommand{\cL}{{\cal{L}}}

\begin{document}

\title{Extensions of superalgebras of Krichever-Novikov type}

\date{}

\author{Marie Kreusch \\ D\'epartement de math\'ematique, Universit\'e de Li\`ege,\\
 Grande traverse 12 (B37), B-4000 Li\`ege, Belgique; m.kreusch@ulg.ac.be
}

\maketitle

\begin{abstract}
An explicit construction of central extensions of Lie superalgebras of Krichever-Novikov type is given. In the case of Jordan superalgebras related to the superalgebras of Krichever-Novikov type we calculate a 1-cocycle with coefficients in the dual space.
\end{abstract}

\maketitle

{\bf Key Words}: 
Krichever-Novikov Lie superalgebras, Jordan superalgebras, Lie antialgebras, Gelfand-Fuchs cocycle.

\medskip


\section{Introduction}

In 1987, I. M. Krichever and S. P. Novikov \cite{KN1987}, \cite{KN1988} and \cite{KN1989} introduced and studied a family of Lie algebras generalizing the Virasoro algebra. 
Krichever-Novikov algebras are obtained as central extensions
of the Lie algebras of meromorphic vector fields on a Riemann surface of arbitrary genus $g$
with two marked points. 
M. Schlichenmaier studied the Krichever-Novikov Lie algebras for more than two
marked points \cite{Sch1990}, \cite{Sch1990bis} and \cite{Sch1990bisbis}.
He showed, in particular, the existence of local 2-cocycles 
and central extensions for multiple-point Krichever-Novikov algebras \cite{Sch2003}, 
extending the explicit formula of 2-cocycles due to Krichever and Novikov. 
Deformations on theses algebras were studied in \cite{FS2003} and \cite{FS2005} by A. Fialowski and M. Schlichenmaier. 

The notion of Lie antialgebra was introduced by V. Ovsienko  in
 \cite{Ovs2011}, where the geometric origins were explained. 
It was then shown in \cite{LM2012} that these algebras are particular cases of Jordan superalgebras. 
The most important property of Lie antialgebras is their relationships with Lie superalgebras
see \cite{Ovs2011}, \cite{M2009}, \cite{LM2012} and \cite{LM2011};
different from the classical Kantor-Koecher-Tits construction 
for general Jordan superalgebras. One of the main example of \cite{Ovs2011} is the conformal Lie antialgebra
$\mathcal{AK}(1)$ closely related to the Virasoro algebra 
and the Neveu-Schwarz Lie superalgebra $\mathcal{K}(1)$. In \cite{M2009}, S. Morier-Genoud studied  an other important finite dimensional Lie antialgebra: $\mathcal{K}_3$, called the Kaplansky Jordan superalgebra which is related to $osp(1|2)$.

Lie superalgebras of Krichever-Novikov type, $\mathcal{L}_{g,N}$, and the relation with Jordan superlagebras of Krichever-Novikov type,  $\mathcal{J}_{g,N}$, were studied by S. Leidwanger and S. Morier-Genoud in \cite{LM2011}. 
They found examples of Lie antialgebras generalizing $\mathcal{AK}(1)$, as the same way that $\mathcal{L}_{g,N}$ generalizes $\mathcal{K}(1)$. In the present paper, we study central extensions of
$\mathcal{L}_{g,N}$ and the corresponding 1-cocycles on  $\mathcal{J}_{g,N}$.

Our first theorem is an explicit formula for a local non-trivial 2-cocycle on $\mathcal{L}_{g,N}$.
This formula uses projective connections and is very similar
to the formula of Krichever-Novikov and Schlichenmaier. 
This formula was first studied in the case of two points by P. Bryant in \cite{B1990}.
We prove that the cohomology class of this 2-cocycle is independent of the choice of
the projective connection.
Fixing a theta characteristics and a splitting (and therefore an almost-grading) for the superalgebra $\mathcal{L}_{g,N}$, there is a unique up to equivalence and scaling non-trivial almost-graded central extension.  

Our second theorem is an explicit formula for a local 1-cocycle on $\mathcal{J}_{g,N}$ with coefficient in the dual space.
Recently,  P. Lecomte and V. Ovsienko  introduced a cohomology theory
of Lie antialgebras in~\cite{LO2011}. 
In particular, they discovered two non-trivial cohomology classes
of the conformal Lie antialgebra $\mathcal{AK}(1)$
analogous to the  Gelfand-Fuchs class and to the Godbillon-Vey class. 
The cocycle on $\mathcal{J}_{g,N}$ studied in this paper satisfies similar properties than those found in \cite{LO2011}. 
It is given by a very simple and geometrically natural formula
that explains the geometric nature of Lie antialgebras
associated to Riemann surfaces.

Interesting explicit examples of superalgebras arise in the case of the Riemann sphere
with three marked points.
These examples were thoroughly studied in \cite{Sch1990} and  \cite{LM2011}.
The corresponding Lie superalgebra is denoted by $\mathcal{L}_{0,3}$ and
the  Jordan superalgebra by $\mathcal{J}_{0,3}$. 
These two algebras are closely related since $\mathcal{L}_{0,3}$ 
is the adjoint superalgebra of $\mathcal{J}_{0,3}$. 
Moreover, these algebras contain the conformal algebras:
 $$
  \mathcal{L}_{0,3}\supset \mathcal{K}(1) \supset \mathrm{osp}(1|2)
  \qquad\hbox{and}\qquad 
   \mathcal{J}_{0,3}\supset \mathcal{AK}(1) \supset  \mathcal{K}_3.
   $$
We calculate explicitly the $2$-cocycle on the Lie superalgebra $\mathcal{L}_{0,3}$
that is unique up to isomorphism and vanishes on the Lie subalgebra $\mathrm{osp}(1|2)$.
 This 2-cocycle induces a $1$-cocycle on 
$\mathcal{L}_{0,3}$ with values in its dual space. 
Finally, we give an explicit formula for a $1$-cocycle 
on $\mathcal{J}_{0,3}$ with values in its dual space.

The paper is organized as follows.
In Section 2, we recall some definitions and main results on the Krichever-Novikov Lie algebras. In particular, we consider $2$-cocycles on these Lie algebras and recall some tools that we will use in the computation of  cocycles in the case of the Riemann sphere. 
In Section $3$, we give the basic definitions of Lie superalgebras $\mathcal{L}_{g,N}$ with significant examples : $\mathcal{K}(1)$ and $\mathcal{L}_{0,3}$. We give a local non-trivial 2-cocycle on $\mathcal{L}_{g,N}$ (Theorem \ref{MainFirst}) and 
in particulat on $\mathcal{L}_{0,3}$.
In Section $4$, we recall the basic notions of Lie antialgebras 
with examples and their relation to Lie superalgebras. 
In section $5$,  we construct a 1-cocycle on $\mathcal{L}_{0,3}$ related to the $2$-cocycle found in section $3$. After, we give a local $1$-cocycle on $\mathcal{J}_{g,N}$ (Theorem \ref{MainSecond}) and construct the unique (up to isomorphism) 1-cocycle on $\mathcal{J}_{0,3}$ that vanishes on $\mathcal{K}_3$.

\section{Lie algebras of Krichever-Novikov type}

In \cite{KN1987}, \cite{KN1988} and \cite{KN1989}, 
Krichever and Novikov introduced some generalizations of the well known 
Witt algebra and its central extension called the Virasoro algebra 
(see \cite{S2003} for a global overview of this theory). 
In this section, we recall the definitions and basic results needed for the sequel. 
All the structures in this paper will be considered over the field $\mathbb{C}$.

\subsection{Definition and examples}

Let $M$ be a compact Riemann surface of genus $g$ 
(i.e., a smooth projective curve over $\mathbb{C}$).
Consider the union of two sets of ordered disjoint points called \textit{punctures}
\[
A = \underbrace{( P_1, \ldots , P_K)}_{:=I} \cup \underbrace{( Q_1, \ldots, Q_{N-K})}_{:=O}
\]
where $N,K \in \mathbb{N} \backslash \{0\}$ with $N\geq 2$ and $1 \leq K < N$. 
We call $I$, the set of \textit{in-points}, and $ O$ the set of \textit{out-points}.
Denote by $ \mathfrak{a}_{g,N}$ the associative algebra of meromorphic functions on $M$ 
which are holomorphic outside of $A$. 

The Krichever-Novikov algebra $\mathfrak{g}_{g,N}$ 
is the Lie algebra of meromorphic vector fields on $M$ 
which are holomorphic outside of $A$ 
($\mathfrak{g}_{g,N}$ is equipped with the usual Lie bracket of vector fields).
We will use the same symbol for the vector field and its local representation so that the Lie bracket is 
\[
\left[e(z)\frac{d}{dz},f(z) \frac{d}{dz} \right] = \left( e(z) f'(z) - f(z) e'(z) \right) \frac{d}{dz}.
\] 

If $g=0$, one considers the Riemann sphere $\mathbb{CP}^1$ with punctures.
The moduli space ${\mathcal M}_{0,N}$ is of dimension $N-3$.
This means that, for $N\leq3$, the points can be chosen in an arbitrary way
providing isomorphic algebraic structures.
Note also that $\mathbb{CP}^1$ can be equipped
with a ``quasi-global'' coordinate $z$.

In the case $g=0$ and $N=2$,
one can take $I= \{0\}$ and $O= \{\infty\}$ and the Krichever-Novikov algebra $\mathfrak{g}_{0,2}$ 
is nothing but the Witt algebra. 
It admits a basis $\{e_n = z^{n+1}\frac{d}{dz} : n \in \mathbb{Z}\}$ satisfying the relations: 
\[
\left[ e_n, e_m \right] = (m-n)e_{n+m}.
\]
The (unique) non-trivial central extension of the Witt algebra is well-known, 
it is called the Virasoro algebra.
This algebra has a basis $\{e_n = z^{n+1}\frac{d}{dz} : n \in \mathbb{Z}\}$ 
together with the central element $c$, such that 
\[
[e_n, e_m] = (m-n)e_{n+m} + \frac{1}{12}(m^3-m)\delta_{n,-m} c  \hspace{0.2cm} , \hspace{0.5cm} [e_n, c] = 0.
\] 
The algebra of functions $\mathfrak{a}_{0,2}$
 is the algebra of Laurent polynomials $\mathbb{C} [z,z^{-1}]$. 

Another simple example considered in \cite{Sch1993} and further in \cite{FS2003}  is the case $g=0$ and $N=3$.
The marked points are then chosen as follows: $I = \{\alpha, - \alpha\}$ and $O= \{ \infty\}$, 
where $\alpha \in \mathbb{C}\backslash \{0\}$.
The Lie algebra $\mathfrak{g}_{0,3}$ is
spanned by the following vector fields, for all $k \in \mathbb{Z}$ :
\begin{equation}
\label{Sch}
V_{2k}(z) = z (z-\alpha)^k (z+ \alpha)^k \frac{d}{dz} \hspace{0.2cm}, 
\qquad  V_{2k+1}(z) =  (z-\alpha)^{k+1} (z+ \alpha)^{k+1} \frac{d}{dz}.
\end{equation}

\subsection{Construction of a $2$-cocycle on $\mathfrak{g}_{g,N}$}

Let us recall the construction of a $2$-cocycle on $\mathfrak{g}_{g,2}$
due to Krichever and Novikov \cite{KN1987} and \cite{KN1988}
and further generalized on $\mathfrak{g}_{g,N}$ by Schlichenmaier \cite{Sch2003}.

Given a Riemann surface and $(U_{\alpha}, z_{\alpha})_{\alpha \in J}$ a covering 
by holomorphic coordinates with transition functions $z_{\beta} = g_{\beta\alpha}(z_{\alpha})$,
a \emph{projective connection} is a system of functions 
$R= (R_{\alpha}(z_{\alpha}))_{\alpha \in J}$ transforming as 
\[
R_{\beta}(z_{\beta}) .  \left(g'_{\beta\alpha}\right)^2 = R_{\alpha}(z_{\alpha}) + S (g_{\beta\alpha})  , 
\]
where
\[
S(g)= \frac{g'''}{g'}-\frac{3}{2} \left( \frac{g''}{g'}\right)^2
\]
is the \emph{Schwarzian derivative} (see \cite{OT2005}) and where $'$ denotes differentiation 
with respect to the coordinate~$z_\alpha.$
It is a classical result that every Riemann surfaces admits a holomorphic projective connection; see \cite{G1966} or \cite{HS1996} (p. 202).

Given a splitting $A = I \cup O$, the algebras $\mathfrak{a}_{g,N}$ and $\mathfrak{g}_{g,N}$ are almost-graded and the modules of tensor densities $\mathcal{F}_{\lambda}$, $\lambda \in \mathbb{Z}$ are almost-graded modules over them; see \cite{Sch2003} pp. 58--61 for more details. 
Moreover, for every \emph{separating cycle} $\mathcal{C}$ (separates the points in $I$ from the points in $O$), with respect to a chosen projective connection $R$, there is a 2-cocycle on 
$\mathfrak{g}_{g,N}$ defined by 
\begin{equation}
\label{CocKN}
\gamma_{\mathcal {C},R} \left(e(z) \frac{d}{dz}, f(z)\frac{d}{dz}\right) = 
\frac{1}{2i\pi}  \int_{\mathcal{C}}  \left(\frac{1}{2} (e^{'''}f-ef^{'''}) - R (e'f-ef') \right) dz.
\end{equation}
Note that, in the case $g=0$,  one can consider $R \equiv 0$.
The 2-cocycle (\ref{CocKN}) can be understood as a generalization
of the famous Gelfand-Fuchs cocycle; see \cite{Fuks1986}.

M. Schlichenmaier \cite{Sch2003} proved that the cohomology class of $\gamma_{\mathcal{C},R}$ 
does not depend on the choice of the connection $R$ (this is a simple calculation). 
He also proved that this cocycle is cohomologically non-trivial, local
(i.e., preserves the almost grading) and that every local cocycle on
$\fg_{g,N}$ is either a coboundary or a scalar multiple of $\gamma_{\mathcal {C},R}$ with $R$ a meromorphic projective connection which is holomorphic outside $A$.  

If the cocycle is local with respect to the splitting, then $\mathcal{C}$ can be taken as a sum of (small) circles around the points in $I$: $\mathcal{C} = \sum^{K}_{i} \mathcal{C}_i$. The integral in (\ref{CocKN}) can be written in the complex analytic setting in terms of the residues.
The Riemann sphere ($g$=0) can be viewed as the structure of the extended complex plane $\widehat{\bbC}$; see \cite{M}.
In the next section, we calculate the residue at $\infty$ and considering the function 
$f_{1/z} : z \mapsto f(\frac{1}{z})$, one has:
\[
Res_{\infty} (f) = - Res_0 \left(\frac{f_{1/z}}{z^2} \right), 
\] 
and moreover, if $z_0 \in \bbC$ is a pole of $f$, of order $p\in \mathbb{N} \backslash \{0\}$, then 
\[
Res_{z_0} f = \frac{1}{(p-1)!}   \lim_{z \to z_0} D^{p-1}\left((z-z_0)^p f(z)\right).
\]

\section{Lie superalgebras of K-N type and their central extensions}

In this section, we recall the notion of Lie superalgebras of Krichever-Novikov type 
$\mathcal{L}_{g,N}$.
We show the existence of a local non-trivial 2-cocycle on $\mathcal{L}_{g,N}$ satisfying similar
 properties to those of the cocycle (\ref{CocKN}).
We consider, in particular, the case $g=0$ and $N=3$, namely, 
the Lie superalgebra~$\mathcal{L}_{0,3}$
and compute the corresponding 2-cocycle explicitly.

\subsection{Definition and examples of Lie superalgebras}

A \emph{Lie superalgebra} is a  $\mathbb{Z}_2$-graded vector space, $\mathcal{L}=   \mathcal{L}_0 \oplus \mathcal{L}_1$, equipped with a bilinear product (Lie bracket), such that 
\begin{enumerate}
\item[(LS1)] \mbox{super skewsymmetry :}  
\hspace{0.5cm}  $ [x,y] = - (-1)^{\bar{x} \bar{y}} [y,x] $
\item[(LS2)] \mbox{super Jacobi identity :} 
\hspace{0.5cm}$ (-1)^{\bar{x}\bar{z}}\left[x,[y,z] \right] 
+ (-1)^{\bar{y}\bar{x}} \left[y,[z,x]\right] + (-1)^{\bar{z}\bar{y}} \left[z,[x,y] \right]= 0 $
\end{enumerate}
for all homogeneous elements $x,y,z$ in $\mathcal{L}.$ 
The subspace  $\mathcal{L}_0$ is the space of \emph{even elements} and the subspace $\mathcal{L}_1$
 is that of \emph{odd elements}.
  The \emph{degree} of a homogeneous element $x$ is denoted by $\bar{x}$, 
  i.e. $\bar{x}= i$ for $x \in \mathcal{L}_i$.
 
\begin{ex} \label{exampleK1}
The {\it conformal Lie superalgebra}
$\mathcal{K}(1)$ is an infinite-dimensional Lie superalgebra with basis
$\{e_n ,\hspace{0.1cm} n \in \mathbb{Z}\}$ of the even part and $\{b_i, \hspace{0.1cm} i \in \mathbb{Z}+ \frac{1}{2} \}$
of the odd part satisfying the relations: 
\[
\left\{
\begin{array}{l}
	\left[ e_n, e_m \right] = \left( m - n \right) e_{n+m} \\[6pt]
	\left[ e_n,  b_i \right]  =  \left(  i -\frac{n}{2} \right)  b_{i+n} \\[6pt]
	\left[ b_i , b_j \right]  =  e_{i+j} .
\end{array}
\right.
\] 
The even part of $\mathcal{K}(1)$ coincides thus with the Witt algebra $\mathfrak{g}_{0,2}$. The  elements 
$\{
b_{-\frac{1}{2}},
b_{\frac{1}{2}},
e_{-1},
e_0,
e_1
\}$
span the classical simple Lie superalgebra $\mathrm{osp}(1|2)$.
\end{ex}

\subsection{The Lie superalgebras $\mathcal{L}_{g,N}$}

The Lie superalgebras of Krichever-Novikov type were studied in \cite{LM2011}. 
Let us briefly recall the main definition.

Let $K$ be the canonical (i.e. holomorphic cotangent) line bundle of $M$ and consider the bundle $K^{\otimes \lambda}$, $\lambda \in \mathbb{Z}$ where we assert that  $K^{\otimes 0}$ is the trivial bundle and $K^{\otimes -1} = K^{*}$ . A square root of $K$ (also called a \emph{theta characteristics}) is a line bundle $S$ such that $S^{\otimes 2}= K.$ On a Riemann surface of genus $g$, since the degree of $K$ is even, the number of different square roots equals $2^{2g}.$ Despite for $g=0$, the theta characteristics is not unique. So, let fix one on $M$ for the rest of the paper, now we can consider the bundle $K^{\otimes \lambda}$ where $\lambda \in \mathbb{Z} \cup \frac{1}{2} + \mathbb{Z}$. Denote $\mathcal{F}_{\lambda}$ the (infinite-dimensional) vector space of global meromorphic sections of $K^{\otimes \lambda}$ which are holomorphic on $M \backslash A$, also called space of tensor densities of weight $\lambda$ in \cite{LM2011}. The space $\mathcal{F}=\oplus_{\lambda} \mathcal{F}_{\lambda}$ is a Poisson algebra 
with the following bilinear operations (given in local coordinates): 
\begin{align*}
\bullet \hspace{0.2cm} &  :\hspace{0.2cm}  
\cF_{\lambda} \times \cF_{\mu} \longrightarrow \cF_{\lambda+\mu} : 
& \hspace{0.2cm}    (e(z) dz^{\lambda} , f(z) dz^{\mu}  )   \hspace{0.2cm} 
& \longmapsto \hspace{0.2cm}  e(z) f(z) dz^{\lambda+\mu} \\
\{, \} \hspace{0.2cm} &  :
\hspace{0.2cm}  
\cF_{\lambda} \times \cF_{\mu} \longrightarrow \cF_{\lambda+\mu+1} : 
& \hspace{0.2cm}    (e(z) dz^{\lambda} , f(z) dz^{\mu} )   \hspace{0.2cm}  
& \longmapsto \hspace{0.2cm}  \left(\mu e'(z) f(z) - \lambda e(z) f'(z)\right) dz^{\lambda+\mu+1}.
\end{align*}

We have the Lie algebra isomorphism $\mathfrak{g}_{g,N} \cong \cF_{-1} $,
and the natural action of the Lie algebra   $\mathfrak{g}_{g,N}$ on $\cF_{-1/2}$ is given by the above
Poisson bracket. 

\begin{df}
The \emph{Lie superalgebra of Krichever-Novikov}, denoted by $\cL_{g,N}$, is the vector space 
$\left(\cL_{g,N})_0 \oplus (\cL_{g,N}\right)_1=  \mathfrak{g}_{g,N}\oplus \cF_{-1/2}$ 
with the Lie bracket defined by
\begin{align*}
[e(z) (dz)^{-1}, f(z) (dz)^{-1}] & =  \{ e(z) (dz)^{-1}, f(z) (dz)^{-1} \}   \\[6pt]
[e(z) (dz)^{-1}, \psi(z) (dz)^{-1/2}] & =  \{ e(z) (dz)^{-1} , \psi(z) (dz)^{-1/2} \}  \\[6pt]
[\varphi(z) (dz)^{-1/2}, \psi(z) (dz)^{-1/2}] & = \frac{1}{2}  \varphi(z) (dz)^{-1/2} \bullet \psi(z) (dz)^{-1/2} .
\end{align*}
\end{df} 

\noindent
The axioms of Lie superalgebras can be easily checked.

More precisely, we can write in coordinates:
\[
\left\{
\begin{array}{l}
	\left[ e (dz)^{-1}, f (dz)^{-1} \right] = \left(-e' f + e f' \right) (dz)^{-1}  \\[6pt]
	\left[ e (dz)^{-1}, \psi (dz)^{-1/2} \right]  = \left( -\frac{1}{2}e' \psi + e \psi ' \right) (dz)^{-1/2}  \\[6pt]
	\left[ \varphi (dz)^{-1/2}, \psi (dz)^{-1/2} \right]   = \frac{1}{2}\, \varphi \psi (dz)^{-1} .
\end{array}
\right.
\]

\begin{ex}
a)
In the case of two marked points $  A = \{0 \} \cup \{\infty \}$ on the Riemann sphere, 
we can identify $\mathcal{L}_{0,2}$ with $\mathcal{K}(1)$; see Example \ref{exampleK1}. We have the following identification: 
\[
e_n = z^{n+1} (dz)^{-1}, \hspace{0.5cm} b_i = \sqrt{2} \hspace{0.1cm} z^{i + 1/2} (dz)^{-1/2}.
\]

b)
Consider the Lie superalgebra $\cL_{0,3}$
associated with the Riemann sphere with three punctures
$ A = \{- \alpha, \alpha \} \cup \{\infty \}$,
where $\alpha  \in \bbC \backslash \{0\}$.
According to \cite{Sch1993}; see also \cite{FS2003}, the even part of $\mathcal{L}_{0,3}$, namely $\fg_{0,3}$, has the basis (\ref{Sch}).
The odd part, $\cF_{-1/2}$, according to \cite{LM2011}, has the basis
\begin{equation}
\label{SchBis}
\varphi_{2k+\frac{1}{2}}(z) = \sqrt{2} z (z-\alpha)^k (z+ \alpha)^k  \,dz^{-1/2} , 
\qquad  \varphi_{2k-\frac{1}{2}}(z) = \sqrt{2}  (z-\alpha)^k (z+ \alpha)^k  \,dz^{-1/2},
\end{equation}
 for all $k \in \mathbb{Z}$.
The Lie bracket of this algebra is given in \cite{LM2011}; it is shown 
in particular that the sub-superalgebra 
$\mathcal{L}^-_{0,3} = \left\langle{}V_n  \hspace{0.05cm} : 
\hspace{0.05cm}n \leq 0 \hspace{0.05cm} ; \hspace{0.1cm}\varphi_i : 
\hspace{0.05cm}  i \leq \frac{1}{2}\right\rangle$ of $\mathcal{L}_{0,3}$ 
is isomorphic to $\mathcal{K}(1)$. 
\end{ex}

\subsection{A non-trivial 2-cocycle on $\mathcal{L}_{g,N}$}

In this section, we show that every Lie superalgebra $\mathcal{L}_{g,N}$
has a non-trivial central extension according to the splitting (recall that a theta characteristics is fixed).
To this end, we construct a non-trivial 2-cocycle quite similar to (\ref{CocKN}).
Note that this probles has already been
studied by Bryant in \cite{B1990} for the case $N=2$.

Recall that a \emph{$2$-cocycle} on a Lie superalgebra $\cL$ is an even 
bilinear function $c : \cL \times \cL \longrightarrow \bbC$
satisfying the following conditions: 
\begin{enumerate}
\item [(C1)]
\mbox {super skewsymmetry:}  \hspace{0.5cm}  
$ c(u,v) = - (-1)^{\bar{u} \bar{v}} c (v,u) $
\item [(C2)]
\mbox{super Jacobi identity:} \hspace{0.5cm}
$ c \left( u,[v, w] \right) =   
c \left( [u,v], w \right) + (-1)^{\bar{u}\bar{v}} c \left(v, [u,w]\right)   $
\end{enumerate}
for every homogeneous elements $u,v, w $ $\in \cL$.
As in the usual Lie case, a 2-cocycle defined a central extension of $\cL$.
A 2-cocycle is called {\it trivial}, or a {\it coboundary}
if it is of the form $c(u,v)=f([u,v])$, where $f$ is a linear function on $\cL$.
Otherwise, $c$ is called \textit{non-trivial}.
The space of all 2-cocycles is denoted by $Z^2(\cL)$ and the space of 2-coboundaries
by $B^2(\cL)$, the quotient-space $H^2(\cL)=Z^2(\cL)/B^2(\cL)$ 
is called the second cohomology space of $\cL$.
This space classifies non-trivial central extensions of $\cL$.

The first result of this paper is the following.

\begin{thm}
\label{MainFirst}
(i)
The even bilinear map 
$c:\mathcal{L}_{g,N}\times\mathcal{L}_{g,N}\to\mathbb{C}$
given by
\begin{equation}
\label{CocLie}
\begin{array}{rcl}
c \left(e(z) \frac{d}{dz}, f(z)\frac{d}{dz}\right) 
&=&
\displaystyle
 \frac{- 1}{2i\pi}  \int_{\mathcal{C}}  \frac{1}{2} (e^{'''}f-ef^{'''}) - R (e'f-ef')  dz,\\[16pt]
 c \left(\varphi(z)dz^{-1/2}, \psi(z)dz^{-1/2}\right) 
 &=&
  \displaystyle
\frac{1}{2i\pi}  \int_{\mathcal{C}}\frac{1}{2}    \left( \varphi^{''} \psi + \varphi \psi^{''} \right) 
   -\frac{1}{2}  R\, \varphi \psi dz,\\[16pt]
c \left(e(z) \frac{d}{dz}, \psi(z)dz^{-1/2}\right) 
 &=&
  \displaystyle
0
\end{array}
\end{equation}
where $\mathcal{C}$ is a separating cycle, is a local non-trivial 2-cocycle.

(ii)
The expressions in (\ref{CocLie}) does not depend on the choice of the projective connection.
\end{thm}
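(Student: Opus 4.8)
The plan is to check the axioms (C1)--(C2) block by block, using that $c$ is supported on even--even and odd--odd pairs and vanishes on mixed ones. First I would observe that the even--even block of $c$ is exactly $-\gamma_{\cC,R}$ from (\ref{CocKN}); since an overall sign changes none of the relevant properties, the well-definedness of this block as the period of a meromorphic $1$-form, its skew-symmetry, the cocycle identity on $\fg_{g,N}$, its locality and its cohomological non-triviality all transfer directly from Schlichenmaier's results recalled above. Super skew-symmetry (C1) is then immediate by inspection: the even--even integrand is antisymmetric under $e\leftrightarrow f$, whereas the odd--odd integrand $\tfrac12(\varphi''\psi+\varphi\psi'')-\tfrac12 R\varphi\psi$ is symmetric under $\varphi\leftrightarrow\psi$, which is precisely what (C1) demands for odd elements (where $(-1)^{\bar u\bar v}=-1$); the mixed block is $0$.

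Before (C2), I would check that the odd--odd block is well defined, i.e. that $\bigl(\tfrac12(\varphi''\psi+\varphi\psi'')-\tfrac12 R\varphi\psi\bigr)\,dz$ is a global meromorphic $1$-form for $\varphi,\psi\in\cF_{-1/2}$. This is the one point in the verification of (i) where the defining transformation law of the projective connection $R$ is needed: changing coordinate differentiates the half-densities and produces Schwarzian terms that are cancelled exactly by the transformation of $R$, just as in the classical even computation. For (C2), a parity count shows that all three terms vanish unless $\bar u+\bar v+\bar w$ is even; the all-even triple is the known identity for $\gamma_{\cC,R}$, the purely odd and the two-even--one-odd triples vanish termwise because $c$ kills mixed pairs, and (using (C1)) every triple with exactly two odd entries collapses to the single compatibility
\[
c\!\left(e\tfrac{d}{dz},[\varphi,\psi]\right)=c\!\left([e\tfrac{d}{dz},\varphi],\psi\right)+c\!\left(\varphi,[e\tfrac{d}{dz},\psi]\right),
\]
which is the crux of part (i).

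To establish it I would substitute $[\varphi,\psi]=\tfrac12\varphi\psi\,(dz)^{-1}$ and $[e\tfrac{d}{dz},\varphi]=(-\tfrac12 e'\varphi+e\varphi')(dz)^{-1/2}$ (and similarly for $\psi$) into (\ref{CocLie}) and compare integrands over $\cC$, where exact meromorphic differentials have no residue, so total derivatives may be dropped. A short computation shows the $R$-dependent parts of both sides coincide identically---both reduce to $\tfrac12 R(e'\varphi\psi-e\varphi'\psi-e\varphi\psi')$ up to the common prefactor---so no derivative of $R$ ever appears and the entire burden falls on the $R$-free second- and third-derivative terms. Integrating those by parts, both integrands reduce, modulo total derivatives, to $\tfrac12\,e\,(\varphi\psi)'''$, so the integrals agree; here the relative sign between the even--even and odd--odd normalizations, together with the factor $\tfrac12$ carried by the odd bracket, is exactly what makes the two sides balance. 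I expect this integration-by-parts bookkeeping to be the main obstacle, the remaining steps being formal.

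It remains to address locality, non-triviality and part (ii). Locality follows as for $\gamma_{\cC,R}$: with respect to the almost-grading of $\cF_{-1/2}$ the residue of a product of homogeneous half-densities is nonzero only when the total degree lies in a fixed finite band, so $c$ preserves the almost-grading on the odd block as it does on the even one. Non-triviality is inherited, for if $c=\delta f$ for a linear functional $f$ then restricting to $\fg_{g,N}$ would give $-\gamma_{\cC,R}=\delta(f|_{\fg_{g,N}})$, contradicting the non-triviality of $\gamma_{\cC,R}$. Finally, for (ii), given two connections $R_1,R_2$ the difference $\Omega=R_1-R_2$ transforms as a quadratic differential (the Schwarzian terms cancel), so $\Omega e$ and $\Omega\varphi\psi$ are meromorphic $1$-forms; I would then set $\ell\!\left(e\tfrac{d}{dz}\right)=\tfrac{-1}{2i\pi}\int_{\cC}\Omega e\,dz$ and $\ell|_{\cF_{-1/2}}=0$ and verify $c_{R_1}-c_{R_2}=\delta\ell$ block by block---on even--even via $[e\tfrac{d}{dz},f\tfrac{d}{dz}]=(ef'-e'f)(dz)^{-1}$, on odd--odd via $[\varphi,\psi]=\tfrac12\varphi\psi\,(dz)^{-1}$, and trivially on the mixed block---whence the cohomology class of $c$ is independent of $R$.
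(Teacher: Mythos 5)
Your proposal is correct and takes essentially the same route as the paper: well-definedness of both blocks via the coordinate-transformation law (the $R$-term absorbing the Schwarzian contributions), the cocycle identity reduced by parity to the even--odd--odd case, non-triviality inherited from the restriction to $\mathfrak{g}_{g,N}$, and independence of the projective connection exhibited by writing $c_{R}-c_{R'}$ as the coboundary of the linear functional that pairs the quadratic differential $R-R'$ against the bracket. The only difference is one of detail: the paper dismisses the cocycle identity as ``straightforward,'' whereas you carry out the integration-by-parts bookkeeping (both sides reducing to $\tfrac12\,e\,(\varphi\psi)'''$ modulo exact forms) that substantiates it.
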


\begin{proof}
Part (i).
To show that the above integral is well defined, one notices that,
after coordinate changes $z_{\beta}= g_{\beta, \alpha} (z_\alpha)$, 
the expressions in the both parts of (\ref{CocLie}) are transformed as 1-forms.  
Furthermore, the expression (\ref{CocLie}) is globally defined. 
The cocycle condition is then straightforward. 
Since $c$ is cohomologically non-trivial on the even part (see \cite{FS2003}, p933) it is also the case on $\mathcal{L}_{g,N}$. 

Part (ii).
In the even case, the result is due to Schlichenmaier; see \cite{Sch2003}, p64.
Let $R'$ be a different projective connection,
then $R-R'$ is a well-defined quadratic differential.
The 2-cocycle $ c-c'$
depends only on the Lie bracket of the elements, on the odd part, we have :  
\[
c_{R}(\varphi, \psi) - c'_{R'}(\varphi, \psi) = \frac{1 }{2i \pi} \int_{\mathcal{C}} -\frac{1}{2} (R-R') \varphi \psi dz  =  \frac{1 }{2i \pi} \int_{\mathcal{C}} \{  (R'-R) (dz)^2  \bullet [ \varphi , \psi] (dz)^{-1} \}
\] 
and therefore the above expression is a coboundary. 
\end{proof}

Let us mention that the term ``local'' were introduced by Krichever and Novikov in \cite{KN1987} 
 since it preserves the almost-graded structure.   




\subsection{The case of genus zero}

Let us now assume that $g=0$ and consider the Lie superalgebra $\cL_{0,N}$.
Choose the projective connection $R \equiv 0$ (in the standard flat coordinate $z$)
 adapted to the standard projective structure on $\mathbb{CP}^1$. 
 
An important property of $\cL_{0,N}$ is that it
contains a subalgebra isomorphic to $\mathrm{osp}(1|2)$
that consists in holomorphic
vector fields and $-1/2$-densities.
The Lie superalgebra $\cL_{0,N}$ also contains many copies
of the conformal Lie superalgebra $\mathcal{K}(1)$ consisting in densities
holomorphic outside two points of the set $A$.

In the case $N=2$, there is exactly one splitting on $\mathcal{L}_{0,2}$, hence one almost-grading. So that, we only have one local cocyle, up to a coboundary (and one equivalence class of almost-graded central extension).
The case where $N=3$ is also special since the role of the points in $A$ can be switched by 
${\rm PSL}(2)$-action. Hence, up to isomorphism (but not equivalence) there is only one class of almost-graded central extension.
Moreover, we are interested in the cocycle that vanishes on the Lie subalgebra $\mathrm{osp}(1|2)$, let us now compute the explicit formula for the 2-cocycle (\ref{CocLie}) with $R \equiv 0$ on $\mathcal{L}_{0,3}$.  

\begin{prop}
Up to isomorphism, the $2$-cocycle on the Lie superalgebra $\mathcal{L}_{0,3}$ vanishing on $\mathrm{osp}(1|2)$  is given by

\begin{equation}
\label{Explicit}
\begin{array}{rcl}
c \left( \varphi_{2k+\frac{1}{2}} , \varphi_{2l+\frac{1}{2}}  \right) & = & 0 \\[6pt]
c \left( \varphi_{2k-\frac{1}{2}} , \varphi_{2l-\frac{1}{2}}  \right)  & = &0 \\[6pt]
c \left( \varphi_{2k+\frac{1}{2}} , \varphi_{2l-\frac{1}{2}}  \right)  & = &   4 k (2k +1) \delta_{k+l,0} + 8 \alpha^2 k (k-1) \delta_{k+l,1} \\[6pt]
c \left( V_{2k} , V_{2l}  \right) &  = & - 2  k (4 k^2 -1 ) \delta_{k+l,0} - 8 \alpha^2 k (k-1) (2k-1) \delta_{k+l,1} \\
& & - 8  \alpha^4 k (k-1)(k-2) \delta_{k+l,2} \\[6pt]
c\left( V_{2k+1} , V_{2l+1}  \right)  &= & - 8   \alpha^2 (k+1) k (k-1)  \delta_{k+l,0} - 4 k (k+1) (2k+1) \delta_{k+l,-1} \\[6pt]
c \left( V_{2k} , V_{2l+1}  \right)  & =& 0,
\end{array}
\end{equation}
for all $k,l $ $\in \mathbb{Z}$.
\end{prop}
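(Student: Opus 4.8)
The plan is to specialise Theorem~\ref{MainFirst} to $g=0$, $R\equiv 0$ and the explicit bases (\ref{Sch}) and (\ref{SchBis}), and to evaluate the resulting integrals by residues. Since every basis element is a Laurent polynomial in $z$ times $dz^{-1}$ or $dz^{-1/2}$, and the separating cycle $\mathcal{C}$ may be taken as a symmetric pair of small circles around the in-points $\pm\alpha$, the residue theorem on $\widehat{\bbC}$ gives, for any Laurent polynomial $h$,
\[
\frac{1}{2i\pi}\int_{\mathcal{C}} h\,dz=\mathrm{Res}_{\alpha}h+\mathrm{Res}_{-\alpha}h=-\mathrm{Res}_{\infty}h=\big[\,\text{coefficient of }z^{-1}\text{ in }h\,\big].
\]
Thus each value of $c$ amounts to reading off one Laurent coefficient. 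Before computing I would record that the chosen cocycle indeed vanishes on $\mathrm{osp}(1|2)$: its even generators are $z^a\frac{d}{dz}$ with $a\in\{0,1,2\}$ and its odd ones $z^a\,dz^{-1/2}$ with $a\in\{0,1\}$, and on these the third (resp.\ second) derivative entering (\ref{CocLie}) vanishes identically, so $c\equiv 0$ there; the mixed term is zero by the third line of (\ref{CocLie}). Uniqueness up to isomorphism is not a fresh computation: it follows from Theorem~\ref{MainFirst}, giving a one-dimensional space of local classes, together with the ${\rm PSL}(2)$ identification of the three-point configurations noted above.

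Next I would simplify the integrands by integration by parts on the closed cycle $\mathcal{C}$, where the integral of the derivative of a Laurent polynomial vanishes. This yields $\int_{\mathcal{C}}(\varphi''\psi+\varphi\psi'')\,dz=2\int_{\mathcal{C}}\varphi''\psi\,dz$ on the odd part and $\int_{\mathcal{C}}(e'''f-ef''')\,dz=2\int_{\mathcal{C}}e'''f\,dz$ on the even part. The vanishing entries then drop out from a parity argument: the involution $z\mapsto -z$ fixes $A=\{-\alpha,\alpha,\infty\}$ and preserves $\mathcal{C}$ with its orientation, so for an \emph{even} Laurent polynomial $h$ one gets $\mathrm{Res}_{\alpha}h=-\mathrm{Res}_{-\alpha}h$ and hence $\int_{\mathcal{C}}h\,dz=0$. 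Since the coefficient functions of $\varphi_{2k+\frac{1}{2}},V_{2k}$ are odd and those of $\varphi_{2k-\frac{1}{2}},V_{2k+1}$ are even in $z$, the integrands of $c(\varphi_{2k+\frac{1}{2}},\varphi_{2l+\frac{1}{2}})$, $c(\varphi_{2k-\frac{1}{2}},\varphi_{2l-\frac{1}{2}})$ and $c(V_{2k},V_{2l+1})$ are even and these cocycles vanish, while the remaining three families have odd integrands and survive.

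For the surviving entries I would compute directly, expanding $(z^2-\alpha^2)^m=z^{2m}(1-\alpha^2/z^2)^m$ at $z=\infty$ and extracting the coefficient of $z^{-1}$ in the relevant product of derivatives. For the odd--odd cocycle $c(\varphi_{2k+\frac{1}{2}},\varphi_{2l-\frac{1}{2}})=\frac{1}{2i\pi}\int_{\mathcal{C}}\varphi''\psi\,dz$ this produces exactly the two binomial terms supported on $k+l=0$ and $k+l=1$, giving $4k(2k+1)$ and $8\alpha^2k(k-1)$; the factor $2$ coming from the $\sqrt{2}$'s in (\ref{SchBis}) is what fixes the overall normalisation. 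The even--even cocycles $c(V_{2k},V_{2l})$ and $c(V_{2k+1},V_{2l+1})$ are treated the same way using the third derivatives of $z(z^2-\alpha^2)^k$ and $(z^2-\alpha^2)^{k+1}$.

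The main obstacle is purely bookkeeping: in the even--even case the third derivative spreads over several monomials, so the coefficient of $z^{-1}$ receives contributions from three distinct binomial terms, producing the supports $k+l\in\{0,1,2\}$ (resp.\ $k+l\in\{0,-1\}$) with the delicate $\alpha^2$- and $\alpha^4$-prefactors. Matching these combinatorial coefficients to $-2k(4k^2-1)$, $-8\alpha^2k(k-1)(2k-1)$, $-8\alpha^4k(k-1)(k-2)$, and to $-8\alpha^2(k+1)k(k-1)$, $-4k(k+1)(2k+1)$, is where care is required; the remaining overall sign is supplied by the $-\frac{1}{2i\pi}$ prefactor in the first line of (\ref{CocLie}).
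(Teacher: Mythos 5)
Your proposal is correct and is essentially the paper's own argument: both specialize (\ref{CocLie}) to $R\equiv 0$ on the bases (\ref{Sch}) and (\ref{SchBis}), and evaluate the integrals by pushing the separating cycle to $\infty$ and reading off the residue there (the paper carries this out for $c(V_{2k+1},V_{2l+1})$ via the substitution $z\mapsto 1/z$ and declares the remaining cases similar). Your integration by parts and the $z\mapsto -z$ parity argument for the vanishing entries are convenient shortcuts that streamline, but do not change, the same residue computation; only note that the relevant integrands are rational functions whose poles lie in $\{\pm\alpha,\infty\}$ rather than ``Laurent polynomials'', which is precisely why the identity $\frac{1}{2i\pi}\int_{\mathcal{C}}h\,dz=-\mathrm{Res}_{\infty}\,h$ you rely on is valid.
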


\begin{proof}
Let us give the details of the calculation for the even generators, 
the others cases are similar. 
\begin{eqnarray*}
c \left( V_{2k+1} ,  V_{2l + 1 }\right)  & = & -\frac{1}{2i\pi} \int_{\mathcal{C}_{\alpha} \cup \mathcal{C}_{- \alpha}}   \{ ( z^2 - \alpha^2)^{k+1}\}''' (z^2 - \alpha^2)^{l+1}  \\
&&\hspace{1.5cm}-  \{ ( z^2 - \alpha^2)^{l+1}\}''' (z^2 - \alpha^2)^{k+1}  dz \\ 
& = &  \frac{1}{2i\pi} \int_{\mathcal{C}_{\infty} } 6 z \{(k+1)k - (l+1)l \} (z^2 - \alpha^2)^{k+l} \\
& & \hspace{1.5cm}+ 4 z^3 \{(k+1)k(k-1) - (l+1)l (l-1) \} (z^2 - \alpha^2)^{k+l-1}  dz\\ 
& = & - 6  \{(k+1)k - (l+1)l \}   Res_{0} \left( \frac{(1- z^2 \alpha^2)^{k+l} }{z^{2k+2l +3}} \right) \\
& & -  4  \{(k+1)k(k-1) - (l+1)l (l-1) \}  Res_{0} \left( \frac{(1- z^2 \alpha^2)^{k+l-1} }{z^{2k+2l +3}} \right) .
\end{eqnarray*}
Consider the residues.
If $k+l \leq -2 $, then the functions are holomorphics near $0$ and the residues vanish,
 and if $k+l \geq 1 $ they also vanish taking into account the Taylor development. 
Consider the remaining cases:
\begin{eqnarray*}
\mbox{$k+l = 0$}, \hspace{0.3cm}& \mbox{then } &\hspace{0.3cm}  Res_{0} \left( \frac{1}{z^3}  \right) = 0  \hspace{0.3cm} \mbox{and} \hspace{0.3cm}  Res_{0} \left( \frac{(1- z^2\alpha^2)^{-1}}{z^3}  \right) =  \alpha^2   \\
 \mbox{$k+l = - 1$}, \hspace{0.3cm} &\mbox{then }& \hspace{0.3cm}  Res_{0} \left( \frac{(1- z^2\alpha^2)^{-1}}{z}  \right) = 1  \hspace{0.3cm} \mbox{and} \hspace{0.3cm}  Res_{0} \left( \frac{(1- z^2\alpha^2)^{-2}}{z}  \right) = 1.
\end{eqnarray*} 
Finally, we obtain 
\begin{eqnarray*}
c \left( V_{2k+1} ,  V_{2l + 1 }\right)  & =  &-  6  \{(k+1)k - (l+1)l \}    \delta_{k+l,-1} \\
& & -  4  \{(k+1)k(k-1) - (l+1)l (l-1) \}   ( \alpha^2 \delta_{k+l,0} + \delta_{k+l,-1}) \\
& = & -  8 \alpha^2 (k+1) k (k-1) \delta_{k+l,0} -  4 k (k+1) (2k + 1) \delta_{k+l,-1}.
\end{eqnarray*}
Hence the result.
\end{proof}

\section{Jordan superalgebras of K-N type}

In this section, we consider a special type of Jordan superalgebras, introduced by V. Ovsienko in \cite{Ovs2011} under the name of $\hspace{0.01cm}$ ``Lie antialgebras''. Lie antialgebras were studied in \cite{M2009}, \cite{LM2012}. The cohomology theory of these algebras is developed in \cite{LO2011}.
Lie antialgebras of Krichever-Novikov type, $\mathcal{J}_{g,N}$, were introduced in \cite{LM2011}.
We calculate a local non-trivial 1-cocycle on $\mathcal{J}_{g,N}$ with values in the dual space $\mathcal{J}_{g,N}^*$. 
The construction is very similar to that of the 2-cocycle (\ref{CocLie}) and extends the cocycle found in  \cite{LO2011}.

\subsection{Definition and examples of Lie antialgebras}

\begin{df}
A \emph{Lie antialgebra} on $\mathbb{C}$ is a $\mathbb{Z}_2$-graded  
\textit{supercommutative} algebra
 $\mathcal{A}= \mathcal{A}_0 \oplus \mathcal{A}_1$ with a product:   
\[
x \centerdot y = (-1)^{\bar{x}\bar{y}} y \centerdot x,
\]
for all homogeneous elements $x,y\in\mathcal{A}$, satisfying
the following  conditions.
\begin{enumerate}
\item [(i)]
The subalgebra $\mathcal {A}_0$ is \textit{associative}.

\item[(ii)]
For every $a\in \mathcal{A}_1$, the operator of right multiplication by $a$ is an (odd) derivation
of $\mathcal{A}$, i.e.,
\begin{equation}
\label{OdD}
\left(x \centerdot y\right) \centerdot{}a=
\left(x \centerdot{}a\right) \centerdot y+
(-1)^{\bar{x}}\,x \centerdot\left(y\centerdot{}a\right),
\end{equation}
for all homogeneous elements $x,y\in\mathcal{A}$.
\end{enumerate}
\end{df}

\noindent
Note that, in the case where $\mathcal{A}$ is generated by its odd part $\mathcal {A}_1$,
the first axiom of associativity is a corollary of (\ref{OdD}), cf. \cite{Ovs2011},\cite{LM2012}.

\begin{ex}
The first example of finite-dimensional Lie antialgebra is the famous tiny Kaplansky superalgebra, denoted 
by $\mathcal{K}_3$. It was first studied by K. McCrimmon in \cite{M1994} and after by Morier-Genoud in \cite{M2009} under the name of $asl_{2}$. 
The basis is $\{\varepsilon; a, b \}$ where $\varepsilon $ is even and $a,b$ are odds. 
It is defined by the following relations: 
\[
\left\{
\begin{array}{l}
	\varepsilon \centerdot  \varepsilon  = \varepsilon \\[4pt]
	\varepsilon \centerdot  a  = \frac{1}{2} a \hspace{0.5cm}  \varepsilon \centerdot  b  = \frac{1}{2} b \\[4pt]
	a \centerdot b = \frac{1}{2} \varepsilon .
\end{array}
\right.
\] 
The algebra $\mathcal{K}_3$ is an example of exceptional simple Jordan superalgebra.
\end{ex}

\begin{ex}
The second important example is an infinite-dimensional algebra, 
denoted by $\mathcal{AK}(1)$.
Its geometric origins are related to the contact structure on the supercircle $S^{1|1}$. 
The basis of $\mathcal{AK}(1)$ is 
$\{ \varepsilon_n  \hspace{0.02cm}  : \hspace{0.02cm} n \in \mathbb{Z} \}  \oplus \{ a_i \hspace{0.02cm}  : \hspace{0.02cm}   i \in \mathbb{Z} + \frac{1}{2} \}$ and the relations are 
\[
\left\{
\begin{array}{l}
	\varepsilon_n \centerdot  \varepsilon_m  = \varepsilon_{n+m} \\[4pt]
	\varepsilon_n \centerdot  a_i  = \frac{1}{2} a_{i+n} \\[4pt]
	a_i \centerdot a_j = \frac{1}{2}(j-i) \varepsilon_{i+j}.
\end{array}
\right.
\] 
Note that $\langle\varepsilon_0, a_{-1/2}, a_{1/2}\rangle$ as a subalgebra of $\mathcal{AK}(1)$
isomorphic to $\mathcal{K}_3$. 
\end{ex}

\subsection{Relations to Lie superalgebras}

A natural way to link Lie antialgebras and Lie superalgebras is to consider
the Lie superalgebra of derivations
 $Der(\mathcal{A})$. 
In particular, one has :
$
Der(\mathcal{K}_3) \cong \mathrm{osp}(1|2)$
and
$Der(\mathcal{AK}(1)) \cong \mathcal{K}(1)$, cf. \cite{Ovs2011}. 

Another way to associate a Lie superalgebra
$\mathcal{G}_\mathcal{A}$ to an arbitrary Lie antialgebra~$\mathcal{A}$,
called the \textit{adjoint Lie superalgebra},
was elaborated in \cite{Ovs2011},\cite{LM2012}.
 Consider the $\mathbb{Z}_2$-graded space
$
\mathcal{G}_\mathcal{A} = 
\mathcal{G}_0 \oplus \mathcal{G}_1
$
where, 
$ \mathcal{G}_1 = \mathcal{A}_1$ and 
$ \mathcal{G}_0 := (\mathcal{A}_1 \otimes \mathcal{A}_1) / S $
and where $S$ is the ideal generated by 
\[ 
\{ \hspace{0.05cm}a \otimes b- b \otimes a,\, a \centerdot\alpha \otimes b - a \otimes b \centerdot\alpha 
\hspace{0,1cm} \hspace{0.1cm} | \hspace{0.1cm}
\hspace{0,1cm} a,b \in \mathcal{A}_1, \alpha \in \mathcal{A}_0 \hspace{0.05cm} \}. 
\]
If we denote by $a \odot b$ the image of $a \otimes b$ in $\mathcal{G}_0$, one can write the Lie (super) bracket : 
$$
\begin{array}{rcl}
[a,b] & =& a \odot b \\[6pt]
[a \odot b,c] & =& a \centerdot (b\centerdot c) + b \centerdot (a \centerdot c) \\[6pt]
[a \odot b,c \odot d] &=& 2a \centerdot (b \centerdot c) \odot d + 2b \centerdot (a \centerdot d) \odot c 
\end{array}
$$

There is a natural action of $\mathcal{G}_\mathcal{A}$ on the corresponding
Lie antialgebra~$\mathcal{A}$, so that there is a Lie algebra homomorphism
$$
\mathcal{G}_\mathcal{A}\to{}Der(\mathcal{A}).
$$
Indeed, the action of the odd part $\mathcal{G}_1$ is given by the right multiplication
and this generates the action of $\mathcal{G}_0$, cf \cite{Ovs2011}.
Note that, in the above examples, one has :   
$
\mathcal{G}_{\mathcal{K}_3} \cong \mathrm{osp}(1|2)$ and
$\mathcal{G}_{\mathcal{AK}(1)} \cong \mathcal{K}(1).
$

In general, the adjoint Lie superalgebra is not isomorphic to the Lie superalgebra of derivations.

\subsection{Definition of $\mathcal{J}_{g,N}$}

A new series of Lie antialgebras extended $\mathcal{AK}(1)$ was found by Leidwanger and Morier-Genoud; see \cite{LM2011}.
These algebras are related to Riemann surfaces with marked points and are called the
{\it Jordan superalgebras of Krichever-Novikov type}, $\mathcal{J}_{g,N}$. Remind that a theta characteristics is still fixed,  
the even part of $\mathcal{J}_{g,N}$ is the space of meromorphic functions,
$\mathfrak{a}_{g,N} \cong \cF_{0}$, while the odd part is the space of $-1/2$-densities.

\begin{df}
The Lie antialgebra $\mathcal{J}_{g,N}$ is the vector superspace 
$\mathfrak{a}_{g,N}\oplus \cF_{-1/2}$ 
equipped with the product
\begin{align*}
e(z) \centerdot f(z)  &  =  e(z)  \bullet f(z)  \\
e(z)  \centerdot  \psi(z) (dz)^{-1/2} & = \frac{1}{2}  e(z) \bullet \psi(z) (dz)^{-1/2}   \\
\varphi(z) (dz)^{-1/2} \centerdot  \psi(z) (dz)^{-1/2} & = \{ \varphi(z) (dz)^{-1/2} ,  \psi(z) (dz)^{-1/2} \} .
\end{align*}
\end{df} 
More precisely, we can write: 
\[
\left\{
\begin{array}{l}
	e \centerdot f = ef \\
	e \centerdot \psi \hspace{0.05cm} (dz)^{-1/2} = \frac{1}{2} e  \hspace{0.05cm}  \psi   \hspace{0.1cm} (dz)^{-1/2} \\
	\varphi  \hspace{0.05cm} (dz)^{-1/2} \centerdot \psi  \hspace{0.05cm}  (dz)^{-1/2} = -\frac{1}{2} \varphi'  \hspace{0.05cm} \psi + \frac{1}{2} \varphi   \hspace{0.05cm} \psi'.
\end{array}
\right.
\]
It is shown in \cite{LM2011}, that  the adjoint Lie superalgebra of $\mathcal{J}_{g,N}$
coincides with $\mathcal{L}_{g,N}$.

\begin{ex}
\label{Emx}
a)
In the case of two marked points $ A = \{0 \} \cup \{\infty \}$ on the Riemann sphere, 
the algebra $\mathcal{J}_{0,2}$ can be identified with $\mathcal{AK}(1)$. 

b)
A beautiful example in the case of three punctures on the Riemann sphere is considered in \cite{LM2011}.
One can fix $A = \{- \alpha, \alpha \} \cup \{\infty \},$  where $\alpha  \in \bbC \backslash  \{ 0 \}$.
The Jordan superalgebra $\mathcal{J}_{0,3}$ has the basis 
\[
\begin{array}{ll}
G_{2k}(z) =  (z-\alpha)^k (z+ \alpha)^k , &  
G_{2k+1}(z) = z  (z-\alpha)^k (z+ \alpha)^k,\\[8pt]
\varphi_{2k+\frac{1}{2}}(z) = \sqrt{2} z (z-\alpha)^k (z+ \alpha)^k  \,dz^{-1/2}, & 
 \varphi_{2k-\frac{1}{2}}(z) = \sqrt{2}  (z-\alpha)^k (z+ \alpha)^k \,dz^{-1/2} ,
 \end{array}
\]  
where $k$ $\in \mathbb{Z}.$ Remark that the generators of the odd parts of $\mathcal{L}_{g,N}$ and $\mathcal{J}_{g,N}$ are the same.   
The sub-superalgebra 
 $\mathcal{J}^-_{0,3} = \left\langle{}G_n \hspace{0.02cm} : 
\hspace{0.02cm} n \leq 0 \hspace{0.05cm},\hspace{0.05cm} \varphi_i  \hspace{0.02cm} : 
\hspace{0.02cm}  i \leq \frac{1}{2}\right\rangle$ 
is isomorphic to $\mathcal{AK}(1)$. 
More precisely, the embedding $ \iota : \mathcal{AK}(1) \hookrightarrow \mathcal{J}_{0,3}$
is defined on the generators as follows: 
\[
\begin{array}{lll}
\iota (\varepsilon_{-1}) = 
G_0 + 2 \alpha G_{-1} + 2 \alpha^2 G_{-2}, \hspace{0.5cm} &
 \iota (\varepsilon_{1}) =
 G_0 - 2 \alpha G_{-1} + 2 \alpha^2 G_ {-2}, \hspace{0.5cm}  & 
\iota  (\varepsilon_{0}) = G_0 \\ [6pt] 
\iota (a_{-\frac{1}{2}}) = \frac{1}{2 \sqrt{\alpha}} (\varphi_{1/2} + \alpha \varphi_{- 1/2}), &
\iota (a_{\frac{1}{2}}) = \frac{1}{2 \sqrt{\alpha}} (\varphi_{1/2} - \alpha \varphi_{- 1/2}),&
\end{array}
\]
see \cite{LM2011} for the details.
\end{ex}

\section{1-cocycles with values in the dual space}

In this section, we construct 1-cocycles on $\mathcal{L}_{g,N}$ and $\mathcal{J}_{g,N}$
with values in the dual space.
In the Lie case, existence of such a 1-cocycle is almost equivalent to the existence
of a 2-cocycle with trivial coefficients (\ref{CocLie}).
In the Jordan case, the situation is different.
It was proved in~\cite{Ovs2011} and \cite{LO2011} that the Jordan superalgebra $\mathcal{J}_{g,N}$
has no non-trivial central extensions.
Therefore, there is no 2-cocycle on $\mathcal{J}_{g,N}$ analogous to (\ref{CocLie}).
However, there exists a nice construction of 1-cocycle that has very similar properties.

\subsection{1-cocycle on the K-N Lie superalgebras}

Given a 2-cocycle on a Lie (super) algebra $c:\cL\times\cL\to\mathbb{C}$,
one can define a 1-cocycle, $C$, on~$\cL$ with values in the dual space  $\cL^*$.
The definition is as follow 
\begin{equation}
\label{DefCoc}
\langle C(x),y \rangle := c(x,y),
\end{equation}
for all $x,y\in\cL$.
The $1$-cocycle condition: 
$$
C\left( [x,y]\right) 
= ad^*_{x} (C(y) ) - 
(-1)^{\bar{x}\bar{y}} ad^*_{y}(C(x))
$$
follows from the 2-cocycle condition for $c$.
Note that the converse construction does not work since
 $c$ is not necessarily skewsymmetric. 
 
The 2-cocycle (\ref{CocLie}) defines, therefore, a 1-cocycle on every
Lie superalgebra $\cL_{g,N}$.
 Since a theta characteristics is fixed, the separating cycle $\mathcal{C}$ defined a natural pairing (called a K-N pairing in~\cite{Sch2003}, p58) between $\mathcal{F}_{\lambda}$ and $\mathcal{F}_{1-\lambda}$ which is given by 
\[
\mathcal{F}_{\lambda} \times  \mathcal{F}_{1-\lambda} \longrightarrow \mathbb{C}\hspace{0.3cm} : \hspace{0.2cm}\left(f,g \right) \longmapsto \left< f ,  g \right> := \frac{1}{2 i \pi} \int_{\mathcal{C}}  f \bullet g  .
\]
So that, the space 
\[
\mathcal{F}_{2} \oplus \mathcal{F}_{3/2}
\]
will be seen has a nice geometric sub-space of the dual space $\cL^*_{g,N}$ thanks to the pairing. 

Since for every $a,b$ $\in \mathcal{L}$ and for every $u$ $\in \mathcal{L}^*$ we must have 
\begin{eqnarray}
\left< ad^{*}_{a}u, b \right> := - (-1)^{\bar{a} \bar{u} }\left< u, ad_ {a} b \right>, \label{eq1}
\end{eqnarray} we can see that the coadjoint action of $\cL_{g,N}$ is:
\begin{eqnarray*}
ad^*_{\varphi(z) (dz)^{-1/2}}  \left(u(z) (dz)^2 \oplus w(z) (dz)^{3/2}  \right) 
&=& -  \{ \varphi,   w \}  \oplus -\frac{1}{2} \varphi \bullet u 
\\
& = & - \left( \frac{3}{2}\varphi'  w 
+ \frac{1}{2} \varphi w' \right)  \hspace{0.1cm} (dz)^2 \oplus -\frac{1}{2} \varphi u \hspace{0.1cm} (dz)^{3/2} 
\\
ad^*_{e(z) (dz)^{-1}}  \left(u(z) (dz)^2 \oplus w(z) (dz)^{3/2}  \right)  
&=&   \{ V ,  u \}  \oplus  \{ V ,   w \}
\\
&= & \left( 2 e' u + eu' \right) \hspace{0.1cm}  (dz)^2 \oplus \left( \frac{3}{2} e' w  + ew' \right)  \hspace{0.1cm} (dz)^{3/2} 
\end{eqnarray*}
where $u,w, e$ and $\varphi$ are some meromorphic functions on the surface.

\begin{prop}
\label{OnP}
A local 1-cocycle on $\mathcal{L}_{g,N}$ is given by
\begin{equation}
\label{OneCocLie}
\begin{array}{rcl}
C\left(e(z) \frac{d}{dz}\right) 
&=&
-  \left(e^{'''}- 2Re'-R'e\right) dz^2,\\[6pt]
C\left(\varphi(z)dz^{-1/2}\right) 
 &=&
\left(  \varphi^{''} 
   -\frac{1}{2}  R \varphi\right) dz^{3/2}
\end{array}
\end{equation}
\end{prop}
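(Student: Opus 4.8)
The plan is to exploit the definition $\langle C(x),y\rangle := c(x,y)$ from~(\ref{DefCoc}) together with the K--N pairing, and to lean on the fact—already recorded just after~(\ref{DefCoc})—that such a $C$ automatically satisfies the $1$-cocycle condition because $c$ satisfies the $2$-cocycle condition of Theorem~\ref{MainFirst}. Consequently the substantive task is only to determine, for each homogeneous generator $x$, which element of $\mathcal{F}_2\oplus\mathcal{F}_{3/2}$ represents $C(x)$ under the pairing, and then to check that the resulting $C$ is local. Since the pairing is non-degenerate between $\mathcal{F}_\lambda$ and $\mathcal{F}_{1-\lambda}$, each representative is uniquely pinned down by the identity $\langle C(x),y\rangle=c(x,y)$ evaluated against all $y$.

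For the even generators I would write $C(e(z)\tfrac{d}{dz})=u(z)(dz)^2\in\mathcal{F}_2$, so that pairing against $f(z)(dz)^{-1}$ gives $\frac{1}{2i\pi}\int_{\mathcal{C}}u\,f\,dz$, which must match the first line of~(\ref{CocLie}). The key step is to integrate the cocycle integrand by parts along the closed cycle $\mathcal{C}$, pushing every derivative onto $e$ so that the boundary terms drop out. Then $\frac{1}{2}(e'''f-ef''')$ collapses to $e'''f$, while $-R(e'f-ef')$ becomes $-(2Re'+R'e)f$, the $R'e$ term appearing exactly when the derivative lands on $R$. Comparing integrands yields $u=-(e'''-2Re'-R'e)$, the first formula of~(\ref{OneCocLie}).

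For the odd generators I would likewise set $C(\varphi(z)dz^{-1/2})=w(z)(dz)^{3/2}\in\mathcal{F}_{3/2}$ and pair it against $\psi(z)(dz)^{-1/2}$, giving $\frac{1}{2i\pi}\int_{\mathcal{C}}w\,\psi\,dz$. The same integration by parts turns $\frac{1}{2}(\varphi''\psi+\varphi\psi'')$ into $\varphi''\psi$, whence $w=\varphi''-\frac{1}{2}R\varphi$, the second formula of~(\ref{OneCocLie}). The vanishing mixed term $c(e\tfrac{d}{dz},\psi\,dz^{-1/2})=0$ shows that $C$ is even, i.e. it sends $\mathcal{F}_{-1}$ into $\mathcal{F}_2$ and $\mathcal{F}_{-1/2}$ into $\mathcal{F}_{3/2}$ with no cross terms, consistently with the grading of $\mathcal{L}_{g,N}^*$.

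Well-definedness and locality then come essentially for free: since $c(x,y)$ is coordinate-independent and the K--N pairing is non-degenerate and coordinate-independent, the representing sections $u(dz)^2$ and $w(dz)^{3/2}$ are automatically globally defined densities—this is where the transformation law of the projective connection $R$ enters, implicitly, through the well-definedness of $c$ already granted by Theorem~\ref{MainFirst}. Locality of $C$ follows from locality of $c$, because the pairing respects the almost-grading. The only point requiring genuine care is bookkeeping in the integration by parts, namely confirming that all boundary terms vanish on the closed cycle and that the $R'$-contribution is retained; everything else is a formal manipulation.
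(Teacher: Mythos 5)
Your proposal is correct and takes essentially the same route as the paper: the paper's own proof is the one-line remark that the formulas are ``straightforward from (\ref{CocLie})'' via the definition (\ref{DefCoc}) and the K--N pairing, with locality attributed to the fixed theta characteristics and splitting. Your integration-by-parts computation identifying the representatives $-\left(e'''-2Re'-R'e\right)dz^2$ and $\left(\varphi''-\tfrac{1}{2}R\varphi\right)dz^{3/2}$ is precisely the detail the paper leaves implicit.
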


\begin{proof}
Straightforward from (\ref{CocLie}). Remark that this cocycle is \emph{local} since we fixed a theta characteristics and a splitting on the Riemann surface.
\end{proof}

\begin{ex}
In the case of Riemann sphere ($g=0$) with respect to the splitting, the
1-cocycle (\ref{DefCoc}) related to (\ref{CocLie}) with $R\equiv0$, reads simply: 
\begin{equation}
\label{OnPR0}
C \left( e(z)\frac{d}{dz} \right) =  - e'''(z)\, dz^{2}, 
\qquad C \left( \psi(z) \frac{d}{dz^{1/2}} \right) = \psi''  (z)\, dz^{3/2}, 
\end{equation}
where $z$ is the standard coordinate.
\end{ex}

\subsection{1-cocycle on $\mathcal{L}_{0,3}$}

In the case of the Lie superalgebra $\mathcal{L}_{0,3}$
(and further with the Jordan superalgebra $\mathcal{J}_{0,3}$) the constructed 1-cocycle can be
 calculated explicitly. 

The space  $\cL^*_{0,3}$ has the following basis:
\begin{align*} 
\varphi^*_{2k - 1/2} & 
=  \frac{1}{\sqrt{2}} z (z^2 - \alpha^2)^{-k-1} (dz)^{3/2}, &
  V^*_{2k} & 
=  (z^2 - \alpha^2)^{-k-1} (dz)^2,  \\
\varphi^*_{2k + 1/2} 
& = \frac{1}{\sqrt{2}}  (z^2 - \alpha^2)^{-k-1} (dz)^{3/2}, &
   V^*_{2k+1}  & =  z (z^2 - \alpha^2)^{-k-2} (dz)^2,
\end{align*}
dual to (\ref{Sch}) and (\ref{SchBis}).

\begin{prop}
Up to isomorphism, the 1-cocycle on the Lie superalgebra $\mathcal{L}_{0,3}$ related to (\ref{OnPR0}) that vanishes on $osp(1|2)$ is given by: 
\begin{align*}
C(V_n)  = 
&  \hspace{0.2cm}   -n ( n-1 ) (n+1)  V^*_{-n} 
- 2 \alpha^2 n (n-2) ( n- 1  ) V^*_{-n+2}  
- \alpha^4 n (n-2) (n-4) V^*_{-n+4}    \\[6pt]
C(V_m)  = 
&  \hspace{0.2cm} - (m+1) m (m-1) V^*_{-m} 
-  \alpha^2  (m+1) (m-1) (m-3)V^*_{-m+2} ,  \\[6pt]
C(\varphi_i) = 
& \textstyle
 \hspace{0.2cm} 2 (i + \frac{1}{2})(i - \frac{1}{2}) \varphi^*_{-i} 
+ 2 \alpha^2 (i-\frac{1}{2}) (i-\frac{5}{2}) \varphi^*_{-i+2} ,  \\[6pt]
C(\varphi_j)  = 
&\textstyle
 \hspace{0.2cm} 2 (j + \frac{1}{2})(j - \frac{1}{2}) \varphi^*_{-j} 
+ 2 \alpha^2 (j + \frac{1}{2}) (j - \frac{3}{2}) \varphi^*_{-j+2},  
\end{align*}
where $i- \frac{1}{2}$, $n$ are even and $ j - \frac{1}{2}$, $m$ are odd.
\end{prop}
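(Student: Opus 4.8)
The plan is to apply the general formula (\ref{OnPR0}) for the 1-cocycle $C$ directly to each basis element of $\mathcal{L}_{0,3}$ listed in (\ref{Sch}) and (\ref{SchBis}), compute the resulting meromorphic $2$- or $3/2$-density, and then re-express it in terms of the dual basis $V^*_\bullet,\,\varphi^*_\bullet$ given just above the statement. Since $R\equiv0$ in genus zero, the computation reduces to taking third derivatives of the even generators and second derivatives of the odd generators, so the work is entirely a differentiation bookkeeping exercise. Concretely, for the even generators I would compute $C(V_n)=-V_n'''\,dz^2$, distinguishing the two parities: for $n=2k$ one has $V_{2k}=z(z^2-\alpha^2)^k$ and for $n=2k+1$ one has $V_{2k+1}=(z^2-\alpha^2)^{k+1}$. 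For the odd generators $C(\varphi_i)=\varphi_i''\,dz^{3/2}$, again splitting into $i=2k+\tfrac12$ with $\varphi=\sqrt2\,z(z^2-\alpha^2)^k$ and $i=2k-\tfrac12$ with $\varphi=\sqrt2\,(z^2-\alpha^2)^k$.

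The key step after differentiating is to match the resulting polynomial-times-power expressions against the dual basis. For instance, differentiating $V_{2k+1}=(z^2-\alpha^2)^{k+1}$ three times yields a linear combination of $z^3(z^2-\alpha^2)^{k-2}$ and $z(z^2-\alpha^2)^{k-1}$; writing $z^3=z\cdot(z^2-\alpha^2)+\alpha^2 z$ and $z^2=(z^2-\alpha^2)+\alpha^2$ lets me collect everything into terms of the form $z\,(z^2-\alpha^2)^{-k'-2}$ and $(z^2-\alpha^2)^{-k'-1}$, which are exactly the shapes of $V^*_{\mathrm{odd}}$ and $V^*_{\mathrm{even}}$. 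Reading off the matching indices $-n,\,-n+2,\,-n+4$ (and $-m,\,-m+2$) and collecting the scalar coefficients produces the stated formulas. The odd case is analogous but shorter, since only two densities appear, giving the two-term expressions for $C(\varphi_i)$ and $C(\varphi_j)$.

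The final point to address is the normalization clause ``up to isomorphism \dots that vanishes on $\mathrm{osp}(1|2)$.'' Here I would invoke the earlier remark that in genus zero the holomorphic vector fields and $-1/2$-densities span a copy of $\mathrm{osp}(1|2)$, spanned in this basis by $V_{-1},V_0,V_1$ and $\varphi_{\pm1/2}$; one checks that the formula (\ref{OnPR0}), which already used $R\equiv0$ adapted to the standard projective structure on $\mathbb{CP}^1$, vanishes on these low-degree generators (third derivatives of degree $\le2$ polynomials and second derivatives of degree $\le1$ polynomials vanish), so the chosen $C$ is the distinguished representative. By the uniqueness up to isomorphism established for the $2$-cocycle in the preceding Proposition, together with the correspondence (\ref{DefCoc}) between $2$-cocycles and dual-valued $1$-cocycles, this fixes $C$ uniquely.

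The main obstacle I expect is purely computational: carrying out the third derivatives of $(z^2-\alpha^2)^{k+1}$ and $z(z^2-\alpha^2)^k$ in closed form as functions of $k$, and then performing the algebraic reduction that splits mixed powers of $z$ so that the coefficients factor neatly as $n(n-1)(n+1)$, $2\alpha^2 n(n-2)(n-1)$, and so on. Keeping the index shifts consistent when passing from the variable $k$ to the weight label $n=2k$ or $n=2k+1$ (and likewise for the half-integer odd weights $i$) is the place where sign or off-by-one errors are most likely, so I would verify the output against the already-computed $2$-cocycle (\ref{Explicit}) via the defining pairing $\langle C(x),y\rangle=c(x,y)$ as a consistency check.
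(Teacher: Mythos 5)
Your proposal is correct and follows essentially the same route as the paper, whose proof is precisely the one-line remark that the result is a direct application of the general formula (\ref{OneCocLie}) with $R\equiv0$ to the basis (\ref{Sch}), (\ref{SchBis}), rewritten in the dual basis. Your additional details (the reduction $z^3=z(z^2-\alpha^2)+\alpha^2 z$ for matching against $V^*_\bullet$, the verification that $C$ vanishes on $\langle V_{-1},V_0,V_1,\varphi_{\pm1/2}\rangle\cong\mathrm{osp}(1|2)$, and the consistency check against (\ref{Explicit}) via the pairing (\ref{DefCoc})) only make explicit what the paper leaves implicit.
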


\begin{proof}
This is a simple application of the general formulas (\ref{OneCocLie}) with $R\equiv0$.
\end{proof}

\subsection{Modules and cocycles on Lie antialgebras}

Cohomology of Lie antialgebras was studied in \cite{LO2011}. 
In particular, a (unique up to constant) 1-cocycle $C:\mathcal{AK}(1) \longrightarrow \mathcal{AK}(1)^* $ 
vanishing on  $\mathcal{K}_3$ was constructed. 
 
Let us recall several basic notions from \cite{LO2011}. Let $\cB$ be a $\bbZ_2$-graded vector space and 
$\rho : \mathcal{A} \rightarrow End(\cB)$ an even linear function.
If $\mathcal{A} \oplus \cB$ equipped with the product 
\[
(a,b) \centerdot (a',b') = \left( a \centerdot a', \rho_{a}(b') + (-1)^{\bar{a'}\bar{b}} \rho_{a'} (b) \right)  
\]
for all homogeneous elements $a,a' \in \mathcal{A}$ and $b, b' \in \cB$, is 
 a Lie antialgebra then ($\cB,\rho)$ is called an \emph{$\mathcal{A}$-module}. 
This structure is called a \emph{semi-direct sum} and denoted by $\mathcal{A} \ltimes \cB .$ 
Given an $\mathcal{A}$-module $\cB$, the dual space $\cB^*$ 
is also an $\mathcal{A}$-module, the $\mathcal{A}$-action being given by 
\begin{eqnarray}
\left< \rho^{*}_{a}u, b \right> := (-1)^{\bar{a} \bar{u} }\left< u, \rho_ {a} b \right>, \label{eq1}
\end{eqnarray}
for all homogeneous elements $a \in \mathcal{A}$,  $b \in \cB$ and $ u \in \cB^*$.

A \emph{$1$-cocycle} on a Lie antialgebra $\mathcal{A}$ with coefficients in an 
$\mathcal{A}$-module $\mathcal{B}$, is an even linear map 
$\mathcal{C} : \mathcal{A}\longrightarrow \mathcal{B}$ such that 
\begin{align}
\mathcal{C}\left( u \centerdot v\right) = 
\rho_{u} \left( \mathcal{C}(v) \right) +  (-1)^{\bar{u}\bar{v}} \rho_{v}\left( \mathcal{C}(u) \right). 
\label{eqn1cocycleantiLie}
\end{align}
A Lie antialgebra is tautologically a module over itself,
the adjoint action $ad : \mathcal{A} \longrightarrow End ( \mathcal{A})$ 
defined such that $ ad_{a}(a')  =a \centerdot a'$ for all $ a, a' \in \mathcal{A}$. 
So that, the dual space, $\mathcal{A}^*$, is an 
$\mathcal{A}$-module as well.

\subsection{1-cocycles on $\mathcal{J}_{g,N}$}

It was already proved in \cite{Ovs2011} that a Lie antialgebra
has no non-trivial central extensions, provided the even
part contains a unit element. It follows that the algebras $\mathcal{J}_{g,N}$ have no non-trivial 2-cocycles.
However, one has the following :

\begin{thm}
\label{MainSecond}
(i)
With respect to the splitting, the expression 
\begin{equation}
\label{ACoc}
\mathcal{C}\left( \varepsilon (z) \right) =  - \varepsilon ' (z)dz, 
\qquad \mathcal{C} \left( \psi (z) dz^{-1/2} \right) = 
\left( \psi''(z)- \frac{1}{2}R\,\psi (z)\right) dz^{3/2}
\end{equation}
defines a local 1-cocycle on $\mathcal{J}_{g,N}$ with coefficients in $\mathcal{J}^*_{g,N}$.

(ii)
Consider the cocycle  (\ref{ACoc}) with $R\equiv0$ and vanishing on the subalgebra $\mathcal{K}_3$, then if $N=2$ it is unique up to a multiplicative constant and if $N=3$ it is unique up to isomorphism. 

\end{thm}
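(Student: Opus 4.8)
The plan is to prove both parts separately. For part (i), I would verify directly that the map $\mathcal{C}$ in (\ref{ACoc}) satisfies the $1$-cocycle identity (\ref{eqn1cocycleantiLie}) with respect to the adjoint action of $\mathcal{J}_{g,N}$ on its dual $\mathcal{J}^*_{g,N}$. First I would fix the coadjoint action $\rho^* = ad^*$ explicitly on the relevant density spaces, using the pairing between $\mathcal{F}_\lambda$ and $\mathcal{F}_{1-\lambda}$ (so that $\mathcal{F}_1 \oplus \mathcal{F}_{3/2}$ is identified inside $\mathcal{J}^*_{g,N}$, matching the target weights $dz$ and $dz^{3/2}$ in (\ref{ACoc})). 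The computation then splits into three cases according to the parities of the arguments $u,v \in \mathcal{J}_{g,N}$: even-even ($\varepsilon \centerdot \varepsilon'$), even-odd ($\varepsilon \centerdot \psi$), and odd-odd ($\varphi \centerdot \psi$). In each case I would expand both sides of (\ref{eqn1cocycleantiLie}) using the explicit product of $\mathcal{J}_{g,N}$ from its definition and compare. The fact that $\mathcal{C}$ is \emph{local} follows immediately because a theta characteristics and splitting are fixed, so all operations preserve the almost-grading, exactly as remarked for Proposition \ref{OnP}.

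The geometric well-definedness is the same observation used in Theorem \ref{MainFirst}: after a coordinate change $z_\beta = g_{\beta\alpha}(z_\alpha)$, the Schwarzian-corrected expression $\psi'' - \tfrac{1}{2}R\psi$ transforms as a section of $K^{\otimes 3/2}$, so $\mathcal{C}$ is globally defined. For the even generators the term $-\varepsilon'\,dz$ transforms as a $1$-form (an element of $\mathcal{F}_1$), which is consistent since the even part of $\mathcal{J}_{g,N}$ is $\mathfrak{a}_{g,N} \cong \mathcal{F}_0$ and the derivative of a function is a $1$-form. I expect the odd-odd case to be the most delicate step, because there the product $\varphi \centerdot \psi = -\tfrac{1}{2}\varphi'\psi + \tfrac{1}{2}\varphi\psi'$ lands in $\mathcal{F}_0$ (an even element), and one must check that applying $-\varepsilon'\,dz$ to this reproduces the sum of coadjoint actions of $\varphi$ and $\psi$ on the $dz^{3/2}$-valued terms $\mathcal{C}(\psi)$ and $\mathcal{C}(\varphi)$; tracking the signs $(-1)^{\bar{u}\bar{v}}$ and the factors of $\tfrac{1}{2}$ from the Jordan product is where errors would creep in.

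For part (ii), the strategy is to reduce to the known classification from \cite{LO2011}. I would first treat $N=2$, where $\mathcal{J}_{0,2} \cong \mathcal{AK}(1)$ (Example \ref{Emx}a); then the $1$-cocycle with $R\equiv0$ vanishing on $\mathcal{K}_3 = \langle \varepsilon_0, a_{-1/2}, a_{1/2}\rangle$ is exactly the cocycle constructed in \cite{LO2011}, which was shown there to be unique up to a multiplicative constant, so nothing further is needed once the identification is made. For $N=3$, the subalgebra $\mathcal{K}_3 \subset \mathcal{J}_{0,3}$ is the image of $\langle \varepsilon_0, a_{-1/2}, a_{1/2}\rangle$ under the embedding $\iota$ of Example \ref{Emx}b, and I would argue that the space of $1$-cocycles vanishing on $\mathcal{K}_3$ is one-dimensional by a dimension/locality count analogous to the $N=2$ case, with the residual freedom corresponding precisely to the $\mathrm{PSL}(2)$-action permuting the three marked points. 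The main obstacle here is establishing uniqueness up to isomorphism rather than up to scalar: I would need to show that the $\mathrm{PSL}(2)$-symmetry of $\mathcal{M}_{0,3}$ (already noted for the $2$-cocycle before the Proposition computing (\ref{Explicit})) acts transitively on the distinct normalizations, so that any two such cocycles differ by an automorphism of $\mathcal{J}_{0,3}$ rather than merely a constant.
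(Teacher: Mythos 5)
Your treatment of part (i) is essentially the paper's proof: coordinate-invariance of the expression (\ref{ACoc}), a direct check of (\ref{eqn1cocycleantiLie}) against the explicit coadjoint action on $\mathcal{F}_{1}\oplus\mathcal{F}_{3/2}\subset\mathcal{J}^*_{g,N}$, and locality from the fixed theta characteristics and splitting. For $N=2$ in part (ii), citing \cite{LO2011} is defensible (the paper itself attributes the formula (\ref{LOCoc}) to that reference), although the paper re-derives uniqueness in a self-contained way: it expands an arbitrary cocycle as $\mathcal{C}(\varepsilon_n)=\sum_r \lambda_n^r\,\varepsilon^*_r$, $\mathcal{C}(a_i)=\sum_k \mu_i^k\,a^*_k$, converts (\ref{eqn1cocycleantiLie}) into recursions on the coefficients $\lambda_n^r,\mu_i^k$, and solves them by induction using the vanishing on $\langle\varepsilon_0,a_{-1/2},a_{1/2}\rangle$.

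The genuine gap is in your $N=3$ argument. The paper's key step --- absent from your proposal --- is a propagation lemma: writing an arbitrary $1$-cocycle $\mathcal{C}:\mathcal{J}_{0,3}\to\mathcal{J}^*_{0,3}$ in the basis and using (\ref{eqn1cocycleantiLie}), one shows that $\mathcal{C}$ is uniquely determined on \emph{all} of $\mathcal{J}_{0,3}$ by its restriction to the subalgebra $\mathcal{J}^-_{0,3}=\left\langle G_n : n\le 0;\ \varphi_i : i\le \tfrac{1}{2}\right\rangle\cong\mathcal{AK}(1)$, which contains $\mathcal{K}_3$; combined with the $N=2$ uniqueness this settles the case $N=3$. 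Your substitute, a ``dimension/locality count analogous to the $N=2$ case,'' is precisely the nontrivial point left unproved: since $\mathcal{J}_{0,3}$ strictly contains this copy of $\mathcal{AK}(1)$ (the generators $G_n$ with $n>0$ and $\varphi_i$ with $i>\tfrac{1}{2}$ lie outside $\mathcal{J}^-_{0,3}$), a cocycle vanishing on $\mathcal{K}_3$ could a priori carry independent components on those extra generators, and nothing in your plan rules this out. The $\mathrm{PSL}(2)$-transitivity you single out as the ``main obstacle'' cannot fill this hole: it only accounts for the bookkeeping between cocycles attached to different splittings of the three punctures (which is why the statement reads ``unique up to isomorphism'' rather than up to a constant), and it gives no control on the size of the space of cocycles for a \emph{fixed} splitting. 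That control is exactly what the restriction-to-$\mathcal{J}^-_{0,3}$ argument provides.
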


\begin{proof}
Part (i).
Similarly to formula (\ref{CocLie}),
the expression in the right-hand-side of (\ref{ACoc}) is independent of the choice of the
coordinate $z$.
One now easily checks  that this expression indeed satisfies 
the condition (\ref{eqn1cocycleantiLie}) of $1$-cocycle. 
This follows from the relations : 
\begin{eqnarray*}
ad^*_{\varphi(z) (dz)^{-1/2}}  \left(u(z) dz \hspace{0.1cm} \oplus \hspace{0.1cm} w(z) (dz)^{3/2}  \right) & =&
\textstyle
  - \frac{1}{2} \varphi w  \hspace{0.1cm}  dz \oplus - \left( \frac{1}{2} \varphi u' 
  + \varphi' u \right)  \hspace{0.1cm}   (dz)^{3/2} \\[4pt]
ad^*_{\varepsilon(z)}  \left( u(z) dz \oplus w(z) (dz)^{3/2}  \right) & = &
\textstyle
\varepsilon  u \hspace{0.1cm}dz\hspace{0.1cm} \oplus \hspace{0.1cm}\frac{1}{2} \varepsilon w \hspace{0.1cm}   (dz)^{3/2} 
\end{eqnarray*}
where $u,w, \varepsilon$ and $\varphi$ are some meromorphic functions on the surface.

Part (ii).
Let us first consider the case $N=2$ and show that the cocycle 
(\ref{ACoc}) with $R\equiv0$
from $\mathcal{AK}(1)$ to $\mathcal{AK}(1)^*$ is the unique (up to a multiplicative constant) 1-cocycle
that vanishes on $\mathcal{K}_3$.
The explicit formula in the basis is  given in \cite{LO2011}, for all $n \in \mathbb{Z}$ and all $i \in \mathbb{Z}+ \frac{1}{2}$  :
\begin{equation}
\label{LOCoc}
\textstyle
\mathcal{C}(\varepsilon_{n}) = 
-n \varepsilon^*_{-n} , \hspace{0.5cm} 
\mathcal{C}(a_i )= \left( i-\frac{1}{2} \right) \left( i+ \frac{1}{2} \right) a^*_{-i}.
\end{equation}
Assume that $\mathcal{C} : \mathcal{AK}(1) \longrightarrow \mathcal{AK}(1)^*$ is a 1-cocycle.
Since $\mathcal{C}$ is even, it is of the form
\[
\mathcal{C} \left( \varepsilon_n \oplus a_i \right)\hspace{0.2cm} = \hspace{0.2cm} \mathcal{C} \left( \varepsilon_n \right) \oplus \mathcal{C} \left( a_i \right)  \hspace{0.2cm}= \hspace{0.2cm}\sum_{r \in \mathbb{Z}} \hspace{0.2cm} \lambda_n^r \varepsilon^*_r \oplus  \sum_{k \in \mathbb{Z}+ \frac{1}{ 2} } \mu_i^k  a^*_k .
\] 
The condition of 1-cocycle (\ref{eqn1cocycleantiLie}) gives : 
\begin{align*}
\mathcal{C} \left( \varepsilon_n \centerdot \varepsilon_m \right) 
&= \hspace{0.1cm}  ad^*_{\varepsilon_n} \mathcal{C} \left( \varepsilon_m \right) +
ad^*_{\varepsilon_m} \mathcal{C} \left( \varepsilon_n \right)  & \Leftrightarrow & & \lambda_{n+m}^{r} = \hspace{0.1cm}  & \lambda_{m}^{r+n} + \lambda_{n}^{r+m} \\
\mathcal{C} \left( \varepsilon_n \centerdot a_i \right) &=  \hspace{0.1cm}  
ad^*_{\varepsilon_n} \mathcal{C} \left( a_i \right) + ad^*_{a_i} \mathcal{C} \left( \varepsilon_n \right)  &
\Leftrightarrow&&   \mu_{i+n}^{k} = \hspace{0.1cm} 
& \mu_{i}^{k+n} +(k-i ) \lambda_{n}^{i+k} \\
\mathcal{C} \left( a_i \centerdot a_j \right) 
&=   \hspace{0.1cm}  ad^*_{a_i} \mathcal{C} \left( a_j \right) - ad^*_{a_j} \mathcal{C} \left( a_i \right) &\Leftrightarrow&&  (j-i ) \lambda_{i+j}^{r}  = \hspace{0.1cm}  &  - \mu_{j}^{r+i} +  \mu_{i}^{r+j} ,
\end{align*}
for all $n,m,r \in \mathbb{Z}$ and all $i,j,k \in \mathbb{Z}+ \frac{1}{2}.$ 
Since this cocycle vanishes on the Lie antialgebra $\mathcal{K}_3 $
 generated by $ \langle\varepsilon_0, a_{-1/2}, a_{1/2}\rangle$, by induction one has the following
 (unique) solution: 
\[
\textstyle
\lambda_n^r = - n \delta_{r,-n} \hspace{0.3cm} \mbox{and}    
\hspace{0.3cm}   \mu_i^k =
 (k^2-\frac{1}{4}) \delta_{k,-i}    \hspace{0.5cm} \forall  n,r \in \mathbb{Z} ;  
 \hspace{0.2cm}\forall i,k \in \mathbb{Z}+ \frac{1}{2},
\]
and thus obtains the cocycle (\ref{LOCoc}). 

Now, let us show the uniqueness for $N=3$. As proved in \cite{LM2011}, the subalgebra $\mathcal{J}^-_{0,3} =  \left\langle{}G_n : n \leq 0; \hspace{0.1cm}\varphi_i : i \leq 1/2 \right\rangle$ is isomorphic to $\mathcal{AK}(1)$, cf. Example \ref{Emx} b). Suppose that we have a 1-cocycle $\mathcal{C} : \mathcal{J}_{0,3} \longrightarrow \mathcal{J}_{0,3}^*$ and writing it with the elements of the basis as the same way than in the first part of the proof $(ii)$. Using the 1-cocycle condition  (\ref{eqn1cocycleantiLie}), we can can show that if we know the 1-cocycle $\mathcal{C}$ on $\mathcal{J}^-_{0,3}$ (i.e. on $\mathcal{AK}(1)$), then the cocycle is uniquely determined on $\mathcal{J}_{0,3}$ entirely. Hence the result on $\mathcal{J}_{0,3}$ since the 1-cocycle on $\mathcal{AK}(1)$ is unique when it vanishes on $\mathcal{K}_3$. 

\end{proof}


\begin{rem}
The 1-cocycle (\ref{ACoc}) has a very simple and, geometrically, very natural form :
this is the De Rham differential of a function combined with the Sturm-Liouville equation associated to
a projective connection, applied to a $-1/2$-density.
\end{rem}

\subsection{An explicit formula of the 1-cocycle on $\mathcal{J}_{0,3}$}

We finish the paper with an explicit formula of the 1-cocycle (\ref{ACoc}) in the case of 3 marked points.

\begin{prop} \label{1cocycleantiLie}
Up to isomorphism, the 1-cocycle (\ref{ACoc}) on the algebra $\mathcal{J}_{0,3}$ is given by
\begin{align*}
C(G_n)  = &  \hspace{0.2cm}   - n  G^*_{-n} , \\[4pt]
C(G_m)  = &  \hspace{0.2cm} - m  G^*_{- m} - \alpha^2  (m-1) G^*_{-m+2}  , \\[4pt]
C (\varphi_i) = &  
\textstyle
\hspace{0.2cm} 2 (i + \frac{1}{2})(i - \frac{1}{2}) \varphi^*_{-i} +
 2 \alpha^2 (i-\frac{1}{2}) (i-\frac{5}{2}) \varphi^*_{-i+2}  , \\[4pt]
C (\varphi_j)  = & 
\textstyle
\hspace{0.2cm} 2 (j + \frac{1}{2})(j - \frac{1}{2}) \varphi^*_{-j} + 
2 \alpha^2 (j + \frac{1}{2}) (j - \frac{3}{2}) \varphi^*_{-j+2} ,
\end{align*}
where $i- \frac{1}{2}$ and $n$ are even and $ j - \frac{1}{2}$, $m$ are odd.
\end{prop}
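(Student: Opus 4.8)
The plan is to compute $\mathcal{C}$ directly from the intrinsic formula (\ref{ACoc}) with $R\equiv0$ (legitimate in genus zero, in the standard coordinate $z$ on $\widehat{\bbC}$), evaluating it on each of the four families of basis vectors of $\mathcal{J}_{0,3}$ and then re-expanding the output in the dual basis $\{G^*_n,\varphi^*_i\}$ of $\mathcal{J}^*_{0,3}$. First I would fix this dual basis: the even duals $G^*_n$ live in $\cF_1$ and the odd duals $\varphi^*_i$ in $\cF_{3/2}$, the latter being exactly the $\varphi^*_i$ already used for $\cL^*_{0,3}$. The normalisation is pinned down by the K-N pairing $\langle f,g\rangle=\frac{1}{2i\pi}\int_{\mathcal{C}}f\bullet g$, and the one bookkeeping point worth isolating is parity: since $(z^2-\alpha^2)^m$ is even and $z(z^2-\alpha^2)^m$ is odd, a residue on a separating circle around $\pm\alpha$ is nonzero only for the odd integrands, so every even-function generator must be paired by an odd-function dual and conversely. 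Concretely one takes $G^*_{2j}=z(z^2-\alpha^2)^{-j-1}\,dz$ and $G^*_{2j+1}=(z^2-\alpha^2)^{-j-1}\,dz$ and checks $\langle G^*_n,G_m\rangle=\delta_{n,m}$.

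Next I would observe that the odd part of the computation is already done. The odd generators $\varphi_i$ of $\mathcal{J}_{0,3}$ and $\mathcal{L}_{0,3}$ coincide, and on them (\ref{ACoc}) with $R\equiv0$ reduces to $\varphi_i=\sqrt{2}\,\psi\,dz^{-1/2}\mapsto\sqrt{2}\,\psi''\,dz^{3/2}$, which is verbatim the odd part of the $\mathcal{L}_{0,3}$ 1-cocycle (\ref{OnPR0}). Hence $C(\varphi_i)$ and $C(\varphi_j)$ are identical to those in the preceding proposition and require no new work.

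The only genuinely new content is the even part, where the formula is the much simpler first-order expression $\mathcal{C}(G_n)=-G_n'(z)\,dz$. The mechanical heart of the argument is then to differentiate $G_{2k}=(z^2-\alpha^2)^k$ and $G_{2k+1}=z(z^2-\alpha^2)^k$ by the Leibniz rule and to rewrite every occurring $z^2$ as $(z^2-\alpha^2)+\alpha^2$, so that each derivative becomes a $\mathbb{C}[\alpha^2]$-linear combination of the pure monomials $(z^2-\alpha^2)^{e}$ (even case) or $z(z^2-\alpha^2)^{e}$ (odd case). Each such monomial is, up to the exponent reversal $e\mapsto -e-1$ built into the dual basis, exactly one dual basis vector, so the coefficients can be read off directly; for instance $G_{2k}'=2k\,G_{2k-1}$ yields $C(G_{2k})=-2k\,G^*_{-2k}$, and $G_{2k+1}'=(2k+1)(z^2-\alpha^2)^k+2k\alpha^2(z^2-\alpha^2)^{k-1}$ yields the two-term expression for $C(G_m)$.

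I expect the main obstacle to be clerical rather than conceptual, namely keeping the index dictionary straight. One must track simultaneously the parity of the generator, the downward shift of the $(z^2-\alpha^2)$-exponent produced by differentiation (which is the source of the $\alpha^2$-weighted $(\,\cdot\,+2)$ terms), and the negation-and-shift $e\mapsto-e-1$ hidden in the dual basis, in order to land on the advertised indices $-n$, $-n+2$, and their odd analogues. Finally I would record the consistency checks that single out this representative: $C(G_0)=0$ and $C(\varphi_{\pm1/2})=0$, so $\mathcal{C}$ vanishes on $\mathcal{K}_3$, which by Theorem \ref{MainSecond}(ii) confirms that this is the distinguished cocycle and that the stated formula holds up to isomorphism.
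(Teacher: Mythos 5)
Your proposal is correct and follows essentially the same route as the paper: the paper's proof simply exhibits the dual basis $\{G^*_n,\varphi^*_i\}$ of $\mathcal{J}^*_{0,3}$ and notes that evaluating (\ref{ACoc}) with $R\equiv 0$ on the generators is then a straightforward computation, which is exactly the calculation you spell out. Your two additions --- importing the odd part verbatim from the $\mathcal{L}_{0,3}$ proposition and checking vanishing on $\mathcal{K}_3$ --- are harmless refinements of that same direct computation.
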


\begin{proof}
Elements of the basis of the dual space $\mathcal{J}^*_{0,3}$ are given by:
\begin{align*} 
\varphi^*_{2k - 1/2} & =  \frac{1}{\sqrt{2}} z (z^2 - \alpha^2)^{-k-1} (dz)^{3/2}, &  G^*_{2k} & = z  (z^2 - \alpha^2)^{-k-1} dz , \\
\varphi^*_{2k + 1/2} & = \frac{1}{\sqrt{2}}  (z^2 - \alpha^2)^{-k-1} (dz)^{3/2}, &   G^*_{2k+1}  & =   (z^2 - \alpha^2)^{-k-1} dz .
\end{align*}
The computations are straightforward.
\end{proof}

\medskip

\noindent \textbf{Acknowledgments}.
I am grateful to Pierre Lecomte and Valentin Ovsienko for a number of
enlightening discussions and support. 
I am also pleased to thank
Sophie Morier-Genoud and Severine Leidwanger for their interest and help.   
Furthermore, I would like to thank M. Schlichenmaier for helpful and valuable comments, remarks and suggestions. 

\vskip 1cm











\end{document}